\newtheorem{theorem}{Theorem}[section]
\newtheorem{proposition}[theorem]{Proposition}
\newtheorem{lemma}[theorem]{Lemma}
\newtheorem{corollary}[theorem]{Corollary}
\theoremstyle{definition}
\newtheorem{definition}[theorem]{Definition} 
\newtheorem{example}[theorem]{Example}
\newtheorem{remark}[theorem]{Remark}
\newtheorem*{solution*}{Solution}
\theoremstyle{remark}
\title{Metrics on permutations with the same descent set}
\author{Alexander Diaz-Lopez, Kathryn Haymaker, Colin McGarry, Dylan McMahon} 
\begin{document}

\begin{abstract}
Let $S_n$ be the symmetric group on the set $[n]:=\{1,2,\ldots,n\}$. Given a permutation $\sigma=\sigma_1\sigma_2 \cdots \sigma_n \in S_n$, we say it has a descent at index $i$ if $\sigma_i>\sigma_{i+1}$. Let $\mathcal{D}(\sigma)$ be the set of all descents of $\sigma$ and define $\mathcal{D}(S;n)=\{\sigma\in S_n\, | \,\mathcal{D}(\sigma)=S\}$. We study the Hamming metric and $\ell_\infty$-metric on the sets $\mathcal{D}(S;n)$ for all possible nonempty $S\subset[n-1]$ to determine the maximum possible value that these metrics can achieve when restricted to these subsets.
\end{abstract}
\maketitle
\section{Introduction} 

In this article, we find the maximum values that certain permutation metrics can attain when restricted to subsets of permutations that share a given descent set. Let $[n]$ denote the set $\{1, 2, \ldots, n\}$ and $S_n$ be the set of $n!$ symmetries of $[n]$. We write the elements of $S_n$ in one-line notation, that is, for $\sigma\in S_n$  we write $\sigma=\sigma_{1}\sigma_{2}\cdots\sigma_{n}$ to denote the permutation that sends $1\rightarrow \sigma_{1},$ $2\rightarrow \sigma_{2},\ldots, n\rightarrow \sigma_{n}$.
We say $\sigma$ has a \textbf{descent} at position $i$ in $[n-1]$ if $\sigma_{i}>\sigma_{i+1}$.  We define the \textbf{descent set} of $\sigma$, $\mathcal{D}(\sigma)$, as the set of all the indices at which $\sigma$ has a descent. For example, if $\sigma=58327164 \in S_8$ then $\mathcal{D}(\sigma)=\{2,3,5,7\}$.

Given a subset $S\subseteq[n-1]$, let $\mathcal{D}(S;n)$ be the set of permutations in $S_n$ with descent set $S$, that is,
\[\mathcal{D}(S;n)=\{\sigma \in S_n \, | \, \mathcal{D}(\sigma)=S\}.\]
We can partition $S_n$ as a disjoint union of sets of the form $\mathcal{D}(S;n)$ as we range through all possible subsets $S$ of $[n-1]$. The sets $\mathcal{D}(S;n)$ were first studied by MacMahon \cite{macmahon1920}. More recently, Diaz-Lopez et al. \cite{des19} provided some combinatorial results about them and presented some conjectures. This led to a flurry of work related to descent sets \cite{bencs21,gaetz21,jirad23,raychev23}.

Modeling rankings using permutations is a classic application in statistics, and assessing the distance between rankings by studying the distance between pairs of permutations has been considered in this context since at least the 1950s \cite{dh98, kg90, k58}. More recently, permutations have been used to model data encoding structures, specifically error-correcting codes, including for flash memory storage \cite{bm10, b74permutation, kltt10}. In these data representation models, the distance between pairs of permutations is an indicator of the error correction capabilities of the code. Metrics that have been used in this context include the Hamming metric, the $\ell_{\infty}-$ (Chebyshev) metric, the Ulam metric, and the Kendall-Tau metric (see, e.g., \cite{farnoud2013error, jiang2008error, shieh2012computing}).

In this article, we study the Hamming metric, which measures the number of indices at which two permutations differ and the $\ell_{\infty}$-metric, which measures the maximum component-wise difference between two permutations. Motivated by a paper that studies these metrics on sets of permutations with a given peak set \cite{diaz-lopez24metrics}, the main purpose of this article is to find the maximum Hamming and $\ell_{\infty}$-metric when restricting to permutations that share a descent set---subsets of the form $\mathcal{D}(S;n)\subset S_{n}$ for $S\subset [n-1]$. Theorem \ref{thm:Hamming} provides a complete characterization of the maximum Hamming metric on all sets $\mathcal{D}(S;n)$ and Theorems \ref{thm:n-idescents} and \ref{thm:singleton} provide the maximum $\ell_{\infty}$-metric on $\mathcal{D}(S;n)$ for particular cases of $S$. Finding the maximum $\ell_{\infty}$-metric on most sets $\mathcal{D}(S;n)$ is still an open problem.

\section{Preliminary definitions}

In this section, we define the main objects of study of this article. 
Given a set $S$, a \textbf{metric} $d$ on $S$ is a map $d:S\times S \to [0,\infty)$ such that for $\sigma, \rho, \tau$ in $S$ we have
\begin{enumerate}
    \item $d(\sigma,\rho)=0$ if and only if $\sigma=\rho$,
    \item $d(\sigma,\rho)=d(\rho,\sigma)$, and
    \item $d(\sigma,\tau)\leq d(\sigma,\rho)+d(\rho,\tau)$.
\end{enumerate}
In this article, we focus our attention on sets $S$ consisting of permutations. We first define some metrics on $S_n$ and then restrict them to subsets of $S_n$. 
\begin{definition}\label{def:metrics}
Let $d_H$, denoting the \textbf{Hamming metric}, be the map $d_H:S_n \times S_n \to [0,\infty)$ such that $d(\sigma,\rho)$ is the number of indices where $\sigma$ and $\rho$ differ. That is, if $\sigma=\sigma_1\sigma_2\ldots \sigma_n$ and $\rho=\rho_1\rho_2\ldots \rho_n$ then 
$$d_H(\sigma,\rho)=|\{i \, | \, \sigma_i\neq \rho_i\}|.$$
Let $d_\ell$, denoting the  \textbf{$\ell_\infty$-metric}, be the map $d_{\ell}:S_n \times S_n \to [0,\infty)$ such that 
$$d_{\ell}(\sigma,\rho)=\max\{|\sigma_i - \rho_i| \, | \, 1\leq i \leq n\}.$$
\end{definition}
\begin{example}
    Consider the permutations $\sigma = 1 4 7 5 6 8 3 2 $ and $\rho = 1 3 6 2 4 8 7 5$ in $S_8$. 
    They differ in indices $2,3,4,5,7,8$, hence $d_H(\sigma,\rho)=6$. To compute $d_\ell (\sigma, \rho)$ we analyze their component-wise differences to get \[d_\ell(\sigma,\rho) = \max\{|1-1|,|3-4|,|6-7|,|2-5|,|4-6|,|8-8|,|7-3|,|5-2|\} = 4.\]
\end{example}
We now define descent sets.
\begin{definition}
    Given a permutation $\sigma=\sigma_1\sigma_2\ldots \sigma_n$ in $S_n$, its descent set is defined as 
    \[ \mathcal{D}(\sigma)=\{i \in [n-1] \, | \, \sigma_i>\sigma_{i+1}\}.\]
    Given a set $S\subseteq[n-1]$, we let $\mathcal{D}(S;n)$ be the set of permutations in $S_n$ with descent set $S$, that is, 
    \[ \mathcal{D}(S;n)=\{\sigma \in S_n\,|\, \mathcal{D}(\sigma)=S \}.\]
\end{definition}

In this article, we focus on analyzing the Hamming metric and $\ell_\infty$-metric on subsets of the form $ \mathcal{D}(S;n)$ for $S\subset [n-1]$. In particular, we study the maximum of the set
\[d(\mathcal{D}(S;n)):=\{d (\sigma,\rho) \, | \, \sigma,\rho \in \mathcal{D}(S;n) \text{ with } \sigma\neq \rho\},\]
where $d$ is the Hamming metric or the $\ell_\infty$-metric. We take $S$ to be proper and nonempty as $\mathcal{D}(\emptyset;n)=\{1\,2\,\cdots \, n\}$ and $\mathcal{D}([n-1];n)=\{n\,(n-1)\,\cdots \, 1\}$, hence the sets have cardinality one and we cannot compute distances between distinct permutations.

\begin{remark}
A related question is to study the minimum of the set $d(\mathcal{D}(S;n))$, when $d$ is the Hamming metric or the $\ell_\infty$-metric. Let $S\subset [n-1]$ be nonempty and proper. The minimum of the set $d(\mathcal{D}(S;n))$ is always 2 for the Hamming metric and 1 for the $\ell_\infty$ metric, since once you have a permutation $\sigma$ that is not $1\,2\,\,\cdots\,n$ nor $n\,(n-1)\,\,\cdots\,1$, there will always be two consecutive numbers $i$ and $i+1$ that do not appear consecutively in $\sigma$. Swapping $i$ and $i+1$ leads to a different permutation $\sigma'$ with the same descent set as $\sigma$ and with $d_H(\sigma,\sigma')=2$ and $d_\ell(\sigma,\sigma')=1$. 
\end{remark}

\section{Hamming metric on permutations with a given descent set}
Consider  the Hamming metric $d_H$, as presented in Definition \ref{def:metrics}. Given a set $S\subset [n-1]$, we consider the set of permutations $\mathcal{D}(S;n)$. In this section, we show that for almost all sets $S$, we can find a pair of permutations in $\mathcal{D}(S;n)$ that have distance $n$, the maximum possible Hamming distance between any two permutation in $S_n$. Furthermore, if $S$ consists of (only) consecutive descents starting at 1 or consecutive descents ending at $n-1$, we do not achieve the maximum distance (Theorem \ref{thm:Hamming}). 

   For example, consider $S_4$ and let $S\subset[3]$. Table \ref{tab:s-4}  presents the maximum distance of the permutations in $\mathcal{D}(S;4)$. In this case, most of the subsets $S$ consist of consecutive descents at the start or end of the permutation. As $n$ increases, the proportion of sets that meet this criterion goes to 0 as the number of such sets grows linearly in $n$ and the total number of subsets of $[n-1]$ grows exponentially in $n$. 
\begin{table}
    \centering   
\begin{tabular}{|c|c|c|} \hline
    Descent Set $S$ & $\mathcal{D}(S;4)\subset S_4$ & $d_H(\mathcal{D}(S;4))$
    \\\hline
    $\emptyset$ & \{1234\} & ---\\\hline
    \{1\}& \{\textbf{2}134, \textbf{3}124, \textbf{4}123\} & 3\\\hline
    \{2\}&\{1\textbf{3}24, 1\textbf{4}23, 2\textbf{3}14, 2\textbf{4}13, 3\textbf{4}12\} & 4\\\hline
    \{3\}& \{12\textbf{4}3, 13\textbf{4}2, 23\textbf{4}1\}& 3  \\\hline
    \{1,2\}& \{\textbf{32}14, \textbf{42}13, \textbf{43}12\}& 3\\\hline
    \{1,3\}& \{\textbf{2}1\textbf{4}3, \textbf{3}2\textbf{4}1, \textbf{3}1\textbf{4}2, \textbf{4}1\textbf{3}2, \textbf{4}2\textbf{3}1\} & 4\\\hline
    \{2,3\}& \{1\textbf{43}2, 2\textbf{43}1, 3\textbf{42}1\}& 3\\\hline
    \{1,2,3\} & \{\textbf{432}1\} & ---\\\hline
\end{tabular}
 \caption{Table of descent sets and Hamming distances for $S_4$.}
    \label{tab:s-4}
\end{table}
We now present a lemma that is useful in the proof of Theorem \ref{thm:Hamming}.

\begin{lemma}\label{lem:specialentry}
    If $S=\{1,2,\ldots, k\}$ for some $k\in [n-1]$, then any permutation $\sigma=\sigma_1\sigma_2\cdots \sigma_n$ in $\mathcal{D}(S;n)$ must have $\sigma_{k+1}=1$. Similarly, if $S=\{k,k+1,\ldots, n-1\}$ for some $k\in[n-1]$, then $\sigma_k=n$.
\end{lemma}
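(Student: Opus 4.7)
The plan is a short, direct argument reading off the consequences of the descent pattern. For the first statement, suppose $S = \{1, 2, \ldots, k\}$ and let $\sigma \in \mathcal{D}(S;n)$. Because $\sigma$ has a descent at each of the positions $1, 2, \ldots, k$ and an ascent at every other position in $[n-1]$, the one-line notation factors into a strictly decreasing prefix of length $k+1$ followed by a strictly increasing suffix, namely
\[
\sigma_1 > \sigma_2 > \cdots > \sigma_k > \sigma_{k+1} < \sigma_{k+2} < \cdots < \sigma_n.
\]
Thus $\sigma_{k+1}$ sits at the bottom of both chains: it is strictly smaller than $\sigma_1, \ldots, \sigma_k$ and strictly smaller than $\sigma_{k+2}, \ldots, \sigma_n$. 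Since $\{\sigma_1, \ldots, \sigma_n\} = [n]$, this forces $\sigma_{k+1} = 1$.

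For the second statement, assume $S = \{k, k+1, \ldots, n-1\}$. The same sort of reading, applied to the complement of $S$ in $[n-1]$, shows that $\sigma$ has shape
\[
\sigma_1 < \sigma_2 < \cdots < \sigma_{k-1} < \sigma_k > \sigma_{k+1} > \cdots > \sigma_n,
\]
so $\sigma_k$ is strictly larger than every other entry of $\sigma$, and therefore $\sigma_k = n$.

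There is essentially no obstacle here: the content of the lemma is just the observation that when the descent set is a single interval touching one end, the permutation consists of at most two monotone runs meeting at a position that must be occupied by the global extremum of $[n]$. The only care needed is to make sure the boundary cases are covered correctly, for example $k = n-1$ in the first part (where the decreasing prefix uses all but the last entry) and $k = 1$ in the second part (where the increasing prefix is empty). Both cases are handled by the same chain of strict inequalities given above.
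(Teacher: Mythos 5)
Your proof is correct and rests on the same observation as the paper's: the descent set forces the one-line notation into two monotone runs meeting at position $k+1$ (resp.\ $k$), which must therefore hold the global minimum $1$ (resp.\ maximum $n$). The paper phrases this as a contradiction argument on where the value $1$ (resp.\ $n$) could sit, while you argue directly from the valley/peak position; the difference is purely stylistic.
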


\begin{proof}
    In the case where $S = \{1,2,\ldots,k\}$ for some $k\in[n-1]$, if $\sigma_i=1$ for $i\leq k$ then $i$ is not a descent as $\sigma_{i+1}>\sigma_i=1$, hence this is not possible. If $\sigma_i=1$ for $i>k+1$ then $i-1$ would be a descent as $\sigma_{i-1}>\sigma_i=1$. This contradicts that $\sigma\in\mathcal{D}(S;n)$. Thus, we must have $\sigma_{k+1}=1$. 

    In the case where $S = \{k, k+1,\ldots, n-1\}$, if $\sigma_i=n$ for $i<k$ then $i$ is a descent as $n=\sigma_i>\sigma_{i+1}$, which contradicts that $\sigma\in\mathcal{D}(S;n)$. If $\sigma_i=n$ for $i>k$, then $i-1$ is not a descent as $\sigma_{i-1}<\sigma_i=n$, which again contradicts $\sigma\in\mathcal{D}(S;n)$. Hence, $\sigma_k=n$. 
\end{proof}

We are ready to prove our main theorem of this section, which consists of a proof by induction that breaks the set of subsets of $[n-1]$ into six cases.
\begin{theorem} \label{thm:Hamming}
Let $n\geq 3$ and $S\subset[n-1]$ be a non-empty proper set, then 
\[ \max(d_H(\mathcal{D}(S;n)))=
\begin{cases}
n-1& \text{ if } S=\{i,i+1,\ldots, j\} \text{ for } i=1 \text{ or } j=n-1\\
n& \text{otherwise}.
\end{cases} \]
\end{theorem}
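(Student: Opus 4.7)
The plan is to prove this by induction on $n$. The base case $n=3$ is immediate: the only nonempty proper subsets of $[2]$ are $\{1\}$ and $\{2\}$, both blocks at the endpoints, and direct computation gives $\max d_H = 2 = n-1$ in each. For the inductive step, I partition the nonempty proper $S \subset [n-1]$ into six cases according to whether $1 \in S$, whether $n-1 \in S$, and whether $S$ is one of the excluded blocks $\{1,2,\ldots,k\}$ or $\{k,k+1,\ldots,n-1\}$.

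For the two excluded cases, the upper bound $d_H \leq n-1$ is immediate from Lemma~\ref{lem:specialentry}: every $\sigma \in \mathcal{D}(S;n)$ has a forced coordinate ($\sigma_{k+1}=1$ or $\sigma_k=n$), so any two permutations in $\mathcal{D}(S;n)$ agree at that position. For the matching lower bound I would exhibit an explicit pair; for $S = \{1,\ldots,k\}$ a natural choice is
\[
\sigma = (k+1)\,k\,\cdots\,2\,1\,(k+2)\,(k+3)\,\cdots\,n,
\qquad
\rho = n\,(n-1)\,\cdots\,(n-k+1)\,1\,2\,\cdots\,(n-k),
\]
and a direct check confirms both have descent set exactly $S$ and coincide only at the forced position $k+1$, giving $d_H(\sigma,\rho) = n-1$. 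The symmetric case $S = \{k,\ldots,n-1\}$ is handled by a mirrored construction using the forced value $n$ at position $k$.

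For the four remaining (non-excluded) cases I need to exhibit $\sigma,\rho \in \mathcal{D}(S;n)$ differing at every position. My strategy is to invoke the inductive hypothesis on a reduced descent set $S' \subset [n-2]$ obtained by dropping or shifting one endpoint of $S$, and then extend the resulting pair by inserting a new entry: if $1 \notin S$ I can shift every value up by one and prepend the value $1$ at a chosen position, and if $n-1 \notin S$ I can append the value $n$ at a chosen position (with analogous insertion moves when both endpoints are in $S$). Crucially, the insertion must be done at \emph{different} positions in $\sigma$ and in $\rho$ so that the added coordinate also differs, preserving the total distance $n$.

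The main obstacle will be showing that such an extension is always possible without creating a shared coordinate: a naive insertion often fixes an entry in both permutations and drops the distance to $n-1$. The case $1,n-1 \in S$ is the most delicate, since neither endpoint is a ``free'' slot for insertion; there I expect to fall back on a direct construction based on the ascending-block decomposition induced by $S$, choosing two disjoint systems of block contents whose resulting one-line notations share no coordinate. A few small boundary subcases (where $|S|$ is tiny or where the reduced $S'$ itself is of the excluded form, so the inductive pair has distance only $n-2$) will need to be addressed separately to close the induction cleanly.
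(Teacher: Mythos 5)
Your overall architecture matches the paper's: induction on $n$, the upper bound $n-1$ for the two families of ``endpoint blocks'' via Lemma~\ref{lem:specialentry}, explicit pairs realizing $n-1$ there (your pair for $S=\{1,\ldots,k\}$ is essentially the paper's and checks out), and an insertion/extension argument for everything else. However, the proposal stops short of a proof precisely at the two points you yourself flag as obstacles, and these are the substance of the inductive step, so there is a genuine gap. First, you do not actually give the extension move that avoids a shared coordinate. The paper's resolution for $n-1\notin S$ is concrete: take $\sigma,\rho\in\mathcal{D}(S;n-1)$ at distance $n-1$, choose labels so that $\rho_{n-1}\neq n-1$, append $n$ to both, and then in $\rho$ swap the values $n$ and $n-1$; since those values are not adjacent in $\rho$ the descent set is unchanged, and the last coordinates now differ. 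The case $n-1\in S$ needs a further split on whether $n-2\in S$ and a careful choice of where to insert $n$ and $n-1$ (the paper tracks the position $j$ of the value $n-1$ in $\rho$ and replaces it by $n$). None of this is routine enough to leave as ``to be addressed.''

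Second, and more importantly, the boundary subcases you defer --- where the reduced set $S'$ is itself an excluded block, so induction only yields distance $n-2$ --- cannot be closed by the insertion strategy at all; they require standalone constructions. These are exactly the sets $S=\{1,2,\ldots,k,n-1\}$ and $S=\{k,k+1,\ldots,n-2\}$ with $k>1$, and the paper handles them with explicit pairs of permutations at Hamming distance $n$ (its Cases 3 and 4), not by induction. Your sketch gestures at ``a direct construction based on the ascending-block decomposition'' for the delicate case $1,n-1\in S$, but the truly delicate sets are these two families, one of which ($\{k,\ldots,n-2\}$) does not even contain both endpoints, so your proposed case split by membership of $1$ and $n-1$ does not isolate them. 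Until you exhibit distance-$n$ pairs for these two families and write down the insertion move in full, the induction does not close.
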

\begin{proof}
    We proceed by induction on $n$. For $n=3$, when $S=\{1\}$ we get $\mathcal{D}(S;3)=\{ 312, 213\}$, thus $d(\mathcal{D}(\{1\};3))=2$. When $S=\{2\}$, we have $\mathcal{D}(\{2\};3)=\{132, 231\}$, so $d(\mathcal{D}(\{2\};3))=2$. Although not technically needed, the result is shown to be true for $n=4$  in Table~\ref{tab:s-4}. 

Suppose that the result is true for all descent sets $S'\subset [n-2]$ of permutations in $S_{n-1}$ for $n\geq 4$. Let $S\subset [n-1]$ and consider $\mathcal{D}(S;n)$. We proceed by cases depending on the set $S$. Table \ref{tab:induction-steps} summarizes the cases.

\begin{table}
    \centering
    \begin{tabular}{|c|c|c|}\hline
    Case & $S\subset [n-1]$ & $d_H(\mathcal{D}(S;n))$ \\ \hline
    1 & $\{1,2,...,k\} $\text{ with } $k\leq(n-2) $& $n-1$ \\ \hline
    2 & $\{k, (k+1),...,n-2,n-1\}$ \text{ with } $k>1$& $n-1 $\\ \hline
    3 & $\{1,2,...,k,n-1\}$ \text{ with } $k<(n-2)$& $n$ \\ \hline
    4 & $\{k, (k+1),...,(n-2)\}$ \text{ with } $k>1$ & $n$\\ \hline
    5 & \text{A subset of $[n-2]$ except Cases 1 and 4}&  $n$\\ \hline
    6 & \text{$S'\cup\{n-1\}$ where $S'$ is as in Case 5}&  $n$\\ \hline
\end{tabular}
    \caption{Cases to consider in the induction step in  the proof of Theorem~\ref{thm:Hamming}.}
    \label{tab:induction-steps}
\end{table}
\textbf{Case 1:} If $S=\{1,2,\ldots, k\}$ for $1\leq k\leq n-2$, let 
\[\begin{array}{cccccccccccc}
\sigma&=&n& n-1 & n-2 & \cdots & n-(k-2) & n-(k-1) & 1&2 & \cdots&n-k\\
\rho&=&k+1& k& k-1&\cdots &3&2&1&k+2&\cdots&n.   
\end{array}\]
Then, $d_H(\sigma,\rho)=n-1$. By Lemma \ref{lem:specialentry}, this is the largest value that the Hamming metric can obtain in this set, as any permutation with descent set $S$ will have the value of $1$ at index $k+1$. Thus, $d_H(\mathcal{D}(S;n))=n-1$.

\textbf{Case 2:} If $S=\{k,k+1,\ldots, n-2,n-1\}$ for $2\leq k\leq n-1$, let 
\[{\begin{array}{cccccccccccc}
\sigma&=&1& 2 & \cdots & k-2 &k-1 &n &n-1&n-2 & \cdots& k\\
\rho&=&n-(k-1)& n-(k-2)& \cdots &n-2 &n-1&n&n-k&n-(k+1)&\cdots &1.   
\end{array}}\]
Then, $d_H(\sigma,\rho)=n-1$. By Lemma \ref{lem:specialentry}, this is the largest value that the Hamming metric can obtain in this set, as any permutation with descent set $S$ will have the value of $n$ at index $k$. Thus, $d_H(\mathcal{D}(S;n))=n-1$.

\textbf{Case 3}: If $S=\{1,2,\ldots, k,n-1\}$ for $1\leq k\leq n-3$, let 
\[\footnotesize{\begin{array}{ccccccccccccc}
\sigma&=&n& n-1 & \cdots & n-(k-2) & n-(k-1) & 1&2 & \cdots&n-(k+2)&n-k&n-(k+1)\\
\rho&=&n-1& n-2 & \cdots & n-(k-1) & n-k & 2&3 & \cdots&n-(k+1)&n&1
\end{array}}\]
Then, since $\sigma$ and $\rho$ have descent set $S$ and $d_H(\sigma,\rho)=n$, we get $d_H(\mathcal{D}(S;n))=n$.

\textbf{Case 4:} 
If $S=\{k,k+1,\ldots, n-2\}$ for $2\leq k\leq n-2$, let 
\[\footnotesize{\begin{array}{cccccccccccc}
\sigma&=&1& 2 & \cdots &k-1 & n-1 &n-2&n-3 & \cdots&k&n\\
\rho&=&n-k& n-(k-1)& \cdots  &n-2&n&n-(k+1)&n-(k+2)&\cdots&1&n-1. 
\end{array}}\]
Then, since $\sigma$ and $\rho$ have descent set $S$ and $d_H(\sigma,\rho)=n$, we get $d_H(\mathcal{D}(S;n))=n$.

\textbf{Case 5:} Let $S$ be any nonempty, proper subset of $[n-2]$ except those in Cases 1 and 4. By the inductive assumption, $d_H(\mathcal{D}(S;n-1))=n-1$. Hence, there is a pair of permutations $\sigma,\rho \in S_{n-1}$ with descent set $S$ such that $d_H(\sigma,\rho)=n-1$. Since they differ at every index, assume without loss of generality that $\rho_{n-1}\neq n-1$.

Let $\sigma'$ be the permutation $\sigma$ with $n$ appended at the end. Let $\rho''$ be the permutation $\rho$ with $n$ appended at the end, and let $\rho'$ be the permutation $\rho''$ with the values of $n$ and $n-1$ switched. Since $n-1$ and $n$ do not appear consecutively in $\rho''$ then $\rho'$ has descent set $S$. By construction, $\sigma'$ and $\rho'$ have descent set $S$ and differ at every index; hence $d_H(\mathcal{D}(S;n))=n$.

\textbf{Case 6:} Let $S$ be any nonempty, proper subset of $[n-1]$ with $n-1\in S$, except those in Cases 2 and 3. Then $S=S'\cup \{n-1\}$ for some subset $S'\subset [n-2]$, where $S'$ is not of the form of the sets in Cases 1 and 4. There are two subcases. 

First, suppose $n-2\in S'$. Since $S'$ is not one of the sets in Case 4, there must be a minimum $k\in \{3,\ldots, n-2\}$ such that $\{k, k+1, \ldots, n-2\}\subseteq S'$, and $k-1\notin S'$. By the inductive assumption, there are permutations $\sigma, \rho\in \mathcal{D}(S'; n-1)$, with $d(\sigma, \rho)=n-1$. Since all positions of $\sigma$ and $\rho$ are distinct, at most one of $\sigma$ and $\rho$ has their $k^{th}$ index equal to $n-1$. Without loss of generality, suppose that $\rho_k<n-1$. Since $\{k, k+1, \ldots, n-2\}\subseteq S'$, $\rho$ is decreasing from $\rho_k$ to $\rho_{n-1}$. Thus, $\rho_j=n-1$ for some $1\leq j<k-1$.  We form $\sigma', \rho'\in S_{n}$ as follows. 
Let $\sigma'_i=\sigma_i$ for $i=1, \ldots, k$. We set $\sigma'_k=n$, and $\sigma'_{k+i}=\sigma_{k+i-1}$ for $i=1,\ldots, n-k$. Let $\rho'_i=\rho_i$ for $i=[k-1]\setminus \{j\}$. Set $\rho'_j=n$, and $\rho'_k=n-1$. Finally, let $\rho'_{k+i}=\rho_{k+i-1}$ for $i=1,\ldots, n-k$. By construction and by the inductive assumption, we have $\sigma',\rho' \in \mathcal{D}(S;n)$ and $d(\sigma', \rho')=n$. 

Next, suppose $n-2\notin S'$. By the inductive assumption, there are permutations $\sigma, \rho\in \mathcal{D}(S'; n-1)$, with $d(\sigma, \rho)=n-1$. Since all positions of $\sigma$ and $\rho$ are distinct, at most one of $\sigma$ and $\rho$ have their $(n-1)^{th}$ position equal to $n-1$. Without loss of generality, suppose that $\rho_{n-1}\neq n-1$. Since $n-2\notin S'$, it must be the case that $\rho_j=n-1$ for some $j<n-2$.  Form $\sigma', \rho'\in S_n$ as follows. Set $\sigma'_i=\sigma_i$ for $i=1, \ldots, n-2$, $\sigma'_{n-1}=n$, and $\sigma'_{n}=\sigma_{n-1}$. Similarly, set $\rho'_i=\rho_i$ for $i\in[n-2]\setminus\{j\}$. Set $\rho'_j=n$, and $\rho'_{n-1}=n-1$, and $\rho'_{n}=\rho_{n-1}$. By construction and by the inductive assumption, we have $\sigma',\rho' \in \mathcal{D}(S;n)$ and $d(\sigma', \rho')=n$. 

Thus, by the two subcases above, $d_H(\mathcal{D}(S;n))=n$ in this last case.
\end{proof}

\section{$\ell_\infty$-metric on permutations with a given descent set}
We now shift our attention to the $\ell_\infty$-metric and consider the maximum distance that two permutations in $\mathcal{D}(S;n)$ can achieve under the $\ell_\infty$-metric. In Theorem \ref{thm:n-idescents} we consider the case of $S=[n-i]$ for some $i\in\{2,3,\ldots, n-1\}$. In Theorem \ref{thm:singleton} we consider the case $S=\{i\}$.

\begin{theorem}\label{thm:n-idescents}
Fix $n\in \mathbb{N}$ with $n\geq 3$. For any $i \in \{2, \ldots, n-1\}$, let $S_i=[n-i]$, % and $\mathcal{D}(S_i;n)$, %=\{\sigma \in S_n\, | \, \text{Des}(\sigma)=D_i\},\]
then 
\[\max(d_\ell(\mathcal{D}(S_i;n)))=\max\{i-1,n-i\}=
\begin{cases}
    i-1& \text{ if } i\geq \lfloor \frac{n+1}{2}\rfloor\\
    n-i&\text{ if } i\leq \lfloor \frac{n+1}{2}\rfloor.
\end{cases}\]
\end{theorem}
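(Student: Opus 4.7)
The plan is to use Lemma \ref{lem:specialentry} to pin down the structure of permutations in $\mathcal{D}(S_i;n)$, then compute the range of possible values at each coordinate to derive the upper bound $\max\{i-1,n-i\}$, and finally exhibit a pair of extremal permutations that attain this bound.

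By Lemma \ref{lem:specialentry} applied with $k=n-i$, every $\sigma\in\mathcal{D}(S_i;n)$ satisfies $\sigma_{n-i+1}=1$, and the descent conditions force $\sigma_1>\sigma_2>\cdots>\sigma_{n-i+1}=1<\sigma_{n-i+2}<\cdots<\sigma_n$. Hence each such $\sigma$ is completely determined by how the set $\{2,3,\ldots,n\}$ is partitioned into a subset $D$ of size $n-i$ (filling the decreasing prefix in decreasing order) and a subset $I$ of size $i-1$ (filling the increasing suffix in increasing order).

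For the upper bound, I would analyze the possible values of $\sigma_j$ as these partitions vary. For $1\le j\le n-i$, the entry $\sigma_j$ is the $j$-th largest element of $D$, which lies in the interval $[\,n-i-j+2,\,n-j+1\,]$ of length $i-1$: the minimum is realized when $D=\{2,\ldots,n-i+1\}$ and the maximum when $\{n-j+1,\ldots,n\}\subseteq D$. For $n-i+2\le j\le n$, writing $k=j-(n-i+1)$, the entry $\sigma_j$ is the $k$-th smallest element of $I$ and lies in $[\,j-n+i,\,j\,]$, an interval of length $n-i$. Since $\sigma_{n-i+1}=1$ is fixed, this yields $|\sigma_j-\rho_j|\le\max\{i-1,n-i\}$ for all $\sigma,\rho\in\mathcal{D}(S_i;n)$ and every index $j$, so $d_\ell(\sigma,\rho)\le\max\{i-1,n-i\}$.

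For the lower bound, I would take
\[
\sigma=n\,(n-1)\,\cdots\,(i+1)\,1\,2\,3\,\cdots\,i \qquad\text{and}\qquad \rho=(n-i+1)\,(n-i)\,\cdots\,2\,1\,(n-i+2)\,(n-i+3)\,\cdots\,n,
\]
corresponding to the extreme choices $D_\sigma=\{i+1,\ldots,n\}$ and $D_\rho=\{2,\ldots,n-i+1\}$. A direct comparison shows $|\sigma_j-\rho_j|=i-1$ at every prefix index and $|\sigma_j-\rho_j|=n-i$ at every suffix index, so $d_\ell(\sigma,\rho)=\max\{i-1,n-i\}$, matching the upper bound. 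The main obstacle will be the bookkeeping in the upper-bound argument: one has to verify that the extremal partitions genuinely realize the minima and maxima of each order statistic, and that the interval widths $i-1$ and $n-i$ are independent of the position $j$ on the prefix and suffix respectively.
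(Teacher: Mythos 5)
Your proposal is correct and follows essentially the same route as the paper's proof: the same extremal pair $\sigma,\rho$ (the two extreme choices of the decreasing prefix), and the same coordinate-wise upper bound showing each prefix entry ranges over an interval of length $i-1$ and each suffix entry over an interval of length $n-i$, with the entry $1$ pinned at position $n-i+1$ by Lemma~\ref{lem:specialentry}. Your phrasing in terms of order statistics of the partition of $\{2,\ldots,n\}$ into the prefix set $D$ and suffix set $I$ is just a repackaging of the paper's direct bounds $n-(i-1)-(j-1)\le \pi_j\le n-(j-1)$ and $j-(n-i)\le\pi_j\le j$.
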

\begin{proof}
We first construct two permutations in $\mathcal{D}(S_i;n)$ that achieve the desired maximum distance and then show that no other pair of permutations can have a larger $\ell_\infty$-distance. Let $\sigma$ and $\rho$ be defined as
 $$\begin{array}{cccccccccc}
 \text{index}&&1&2&\cdots&n-i&n-(i-1)&n-(i-2)&\cdots &n\\\hline
\sigma &=&n-(i-1)&n-i&\cdots&2&1&n-(i-2)&\cdots&n\\
\rho &=&n&n-1&\cdots&i+1& 1&2&\cdots&i
\end{array}$$
It is straightforward to verify that both $\sigma,\rho \in \mathcal{D}(S_i;n)$. To compute their $\ell_{\infty}$-distance, note that 
\[ |\sigma_j-\rho_j|=\begin{cases}
    i-1 & \text{if } j \in \{1,2,\ldots, n-i\}\\
    0 & \text{if } j=n-(i-1)\\
    n-i &\text{if } j \in \{n-(i-2),n-(i-3),\ldots, n\}.
\end{cases}\]
Thus, $d_{\ell}(\sigma,\rho)=\max\{i-1, n-i\}$. 

 For any other pair of permutations $\pi,\tau \in \mathcal{D}(S_i;n)$, we will show $d_\ell(\pi,\tau)\leq \max\{i-1, n-i\}$. Since $\pi$ and $\tau$ must decrease in the first $n-i$ indices, the largest numbers that can appear in the first $n-i$ indices are $n, n-1,\ldots, i+1$, respectively. That is, \[\pi_j,\tau_j\leq n-(j-1)=\rho_j \text{ for } j=1,2,\ldots, n-i.\]  
 Similarly, the smallest entries that can appear in the first $n-i$ indices are $n-(i-1),n-i,\ldots,2$, respectively. Thus,
 \[\pi_j,\tau_j\geq n-(i-1)-(j-1) = \sigma_j \text{ for } j=1,2,\ldots, n-i.\]

Since \[n-(i-1)-(j-1)\leq \pi_j,\tau_j\leq n-(j-1)\text{ for }  j=1,2,\ldots, n-i,\] then 
\[|\pi_j-\tau_j|\leq n-(j-1)-(n-(i-1)-(j-1))=i-1,\]
for $j=1,2,\ldots, n-i$.

We now consider the last $i$ indices. By Lemma \ref{lem:specialentry}, $\pi_{n-(i-1)}=\tau_{n-(i-1)}=1$. Since $\pi$ and $\tau$ must increase in the last $i$ indices,  the largest numbers that can appear in the last $i-1$ indices are $n-(i-2),n-(i-3),\ldots, n$, respectively. That is, \[\pi_j,\tau_j\leq j \text{ for } j=n-(i-2),n-(i-3),\ldots, n.\]  

 Similarly, the smallest entries that can appear in the last $i-1$ indices are $2,3,\ldots, i$, respectively. Thus,
 \[\pi_j,\tau_j\geq j-(n-i)  \text{ for } j=n-(i-2),n-(i-3),\ldots, n.\]
Since \[j\leq \pi_j,\tau_j\leq j-(n-i)\text{ for }  j=n-(i-2),n-(i-3),\ldots, n,\] then
\[|\pi_j-\tau_j|\leq j-(j-(n-i))=n-i,\]
for $j=n-(i-2),n-(i-3),\ldots, n.$ 
Therefore, for any $\pi,\tau \in \mathcal{D}(S_i;n)$, we have $d_\ell(\pi,\tau)\leq \max\{i-1,n-i\}=d_\ell({\sigma,\tau})$.
\end{proof}

We now consider permutations with only one descent. 
\begin{theorem}\label{thm:singleton}

 Let $n\geq 6$ and consider $\mathcal{D}(\{i\};n)$, then
     \[  \max(d_\ell(\mathcal{D}(\{i\};n)))= 
     \begin{cases} n-2 & \text{ for }  i=1, n-1  \\ 
    n-i & \text{ for } i= 2,3, \ldots, \left\lfloor\frac{n}{2}\right\rfloor \\
    i & \text{for } i =\lceil\frac{n}{2}\rceil,\lceil\frac{n}{2}\rceil+1,\ldots, n-2.  
    \end{cases}\]
    
\end{theorem}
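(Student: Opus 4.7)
The plan is to prove matching upper and lower bounds via a coordinate-wise span analysis and a single explicit construction that works uniformly in $i$.

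For the upper bound, note that any $\sigma \in \mathcal{D}(\{i\};n)$ is determined by the subset $A = \{\sigma_1,\ldots,\sigma_i\} \subseteq [n]$ of size $i$: both $A$ and its complement appear in increasing order in their respective runs, and the descent condition is equivalent to $A \neq \{1,2,\ldots,i\}$. Writing $A = \{a_1 < \cdots < a_i\}$ and $A^c = \{b_1 < \cdots < b_{n-i}\}$, we have $\sigma_j = a_j$ for $j \leq i$ with $j \leq a_j \leq n-i+j$, and $\sigma_{i+k} = b_k$ for $1 \leq k \leq n-i$ with $k \leq b_k \leq i+k$. Thus, for any pair $\sigma,\rho \in \mathcal{D}(\{i\};n)$, $|\sigma_j - \rho_j| \leq n-i$ on the first run and $|\sigma_j - \rho_j| \leq i$ on the second run, giving $d_\ell(\sigma,\rho) \leq \max(n-i,i)$. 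For $i=1$ the forbidden case $A=\{1\}$ tightens the span at position $1$ from $n-1$ to $n-2$ (since $a_1 \in \{2,\ldots,n\}$), while positions $j \geq 2$ have span at most $1$; this yields $d_\ell \leq n-2$. The case $i = n-1$ is handled symmetrically.

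For the lower bound, I would use the single pair
\begin{align*}
\sigma &= (1,2,\ldots,i-1,\, i+1,\, i,\, i+2,\, i+3,\,\ldots,\,n), \\
\rho &= (n-i+1,\, n-i+2,\,\ldots,\, n,\, 1,\, 2,\,\ldots,\, n-i),
\end{align*}
which lies in $\mathcal{D}(\{i\};n)$ for every $1 \leq i \leq n-1$: in $\sigma$ the unique descent occurs at position $i$ because $i+1 > i$, and in $\rho$ because $n > 1$. A direct position-by-position computation gives $|\sigma_j - \rho_j| = n-i$ for $1 \leq j \leq i-1$, $|\sigma_i - \rho_i| = n-i-1$, $|\sigma_{i+1} - \rho_{i+1}| = i-1$, and $|\sigma_{i+k} - \rho_{i+k}| = i$ for $2 \leq k \leq n-i$. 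When $2 \leq i \leq n-2$ the maximum of these quantities equals $\max(n-i,\,i)$, matching the theorem. When $i = 1$ the $n-i$ terms are vacuous and the maximum of the remaining terms is $n-2$; when $i = n-1$ the $i$ terms are vacuous and the maximum is again $n-2$, consistent with the piecewise formula.

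The main obstacle will be the careful bookkeeping in the span analysis at positions $i$ and $i+1$ in the middle range, where the prohibition $A \neq \{1,\ldots,i\}$ shrinks those spans by one (to $n-i-1$ and $i-1$, respectively) but not enough to affect the overall maximum. As a sanity check, the reverse-complement involution $\sigma \mapsto \sigma'$ defined by $\sigma'_j = n+1-\sigma_{n+1-j}$ yields a bijection $\mathcal{D}(\{i\};n) \to \mathcal{D}(\{n-i\};n)$ that preserves $d_\ell$, so the formulas for $i \leq \lfloor n/2 \rfloor$ and $i \geq \lceil n/2 \rceil$ must agree under $i \leftrightarrow n-i$; this symmetry also allows one to reduce the proof to the case $i \leq n/2$ if one prefers.
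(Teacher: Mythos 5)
Your proposal is correct and follows essentially the same route as the paper: the identical pair $\sigma,\rho$ for the lower bound, and the same position-by-position span bounds (your inequalities $j\leq a_j\leq n-i+j$ and $k\leq b_k\leq i+k$, with the refinements at positions $i$ and $i+1$ forced by $A\neq\{1,\ldots,i\}$, are exactly the paper's bounds) for the upper bound. Your packaging via the subset $A$ of first-run values and the closing remark on the reverse-complement symmetry are pleasant but do not change the argument.
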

\begin{proof}
We will proceed by constructing two permutations in $S_n$ with descent set $\{i\}$ with distance given by the statement and then show that this is the maximum distance any two permutations in $\mathcal{D}(\{i\};n)$ can achieve. Let $\sigma, \rho$ be as follows:
\[\begin{array}{cccccccccccc}
    \sigma&=&1&2&\cdots&i-1&i+1&i&i+2&i+3&\cdots&n \\ 
    \rho&=&n-i+1&n-i+2&\ldots&n-1&n&1&2&3&\ldots&n-i.
\end{array}
    \]
Notice that the difference between the indices of the permutations is given by 
\[|\sigma_j-\rho_j|=\begin{cases}
    n-i&\text{if } j=1,2,\ldots, i-1 \text{ and } i\geq 2\\
    n-(i+1) &\text{if } j=i\\
    i-1 & \text{if } j=i+1\\
    i & \text{if } j=i+2,i+2,\ldots, n \text{ and } i\leq n-2.
\end{cases}\]
A quick check verifies that $d_\ell(\sigma,\rho)$ is given by
\[d_\ell(\sigma,\rho)=\max\{|\sigma_j - \rho_j| \, | \, 1\leq j \leq n\}=\begin{cases} n-2 & \text{ for }  i=1, n-1  \\ 
    n-i & \text{ for } i= 2,3, \ldots, \left\lfloor\frac{n}{2}\right\rfloor \\
    i & \text{for } i =\lceil\frac{n}{2}\rceil,\lceil\frac{n}{2}\rceil+1,\ldots, n-2.  
    \end{cases}\]
We now proceed to show that the maximum distance we can achieve among any pair of distinct permutations in $\mathcal{D}(\{i\}; n)$ is given by $d_\ell(\sigma,\rho)$.

Consider any permutations $\pi,\tau \in \mathcal{D}(\{i\};n)$. Since they are strictly increasing until  index $i$, then the smallest $\pi_1$ and $\tau_1$ can be is 1, and the largest they can be is $n-(i-1)$, as you need to have $i-1$ larger entries for indices $2,3,\ldots, i$. Thus, $1\leq \pi_1,\tau_1\leq n-(i-1)$. By applying a similar argument to entries $2,3,\ldots, i-1$ we get that 
\begin{equation}\label{eq:<i-1}
    j\leq \pi_j,\tau_j\leq n-(i-j) \text{ for } j=1,2,\ldots, i-1.
\end{equation}

For index $i$, we have that $i+1\leq \pi_i,\tau_i$ as the permutations have a descent at $i$, so they need to have indices $1,2,\ldots, i-1, i+1$ all greater than the value at index $i$. Hence, 
\begin{equation}\label{eq:i}
i+1\leq \pi_j,\tau_j\leq n \text{ for } j=i.
\end{equation}

For index $i+1$, the smallest value $\pi_{i+1},\tau_{i+1}$ can attain is $1$ and the largest value they can achieve is $i$ since there are $n-i$ indices that must have larger entries (indices $i, i+2,i+3,\ldots, n$). Hence,
\begin{equation}\label{eq:i+1}
1\leq \pi_j,\tau_j\leq i \text{ for } j=i+1.
\end{equation}

The final $n-i$ entries of $\pi$ and $\tau$ form an increasing sequence. Thus, the smallest value that the indices $i+2, i+3,\ldots, n$ can attain are $2,3,\ldots, n-i$, respectively. Similarly, the largest values they can attain are $i+2, i+3, \cdots, n$, respectively. Thus, 
\begin{equation}\label{eq:>i+1}
    j-i\leq \pi_j,\tau_j\leq j \text{ for } j=i+2,i+3,\ldots, n.
\end{equation}   
Using equations $\eqref{eq:<i-1}-\eqref{eq:>i+1}$, we get that for each $j\in \{1,2,\ldots, n\}$,
\[|\pi_j-\tau_j|\leq|\sigma_j-\rho_j|=\begin{cases}
    n-i&\text{if } j=1,2,\ldots, i-1 \text{ and } i\geq 2\\
    n-(i+1) &\text{if } j=i\\
    i-1 & \text{if } j=i+1\\
    i & \text{if } j=i+2,i+2,\ldots, n-1 \text{ and } i\leq n-2.
\end{cases}.\]
Hence, $\max(d_\ell(\mathcal{D}(\{i\};n)))=d_\ell(\sigma,\rho)$ as desired. 
\end{proof}
\section{Complements of Descent Sets}
In this section, we use a bijection map of $S_n$ to show that $\max(d_\ell(\mathcal{D}(S;n)))$ is equal to $\max(d_\ell(\mathcal{D}(\bar{S};n)))$, where $\bar{S}$ is the complement of $S$ in $[n-1]$. 

\begin{definition}
    For a set $S\subseteq [n-1]$, define the \textbf{complement of $S$} in $[n-1]$ as $\bar{S}=[n-1]\setminus S$. 
\end{definition}

Consider the map $\Phi:S_n\to S_n$ where for $\sigma=\sigma_1\sigma_2\cdots \sigma_n \in S_n$ we have \[\Phi(\sigma)= (n+1-\sigma_1)\,(n+1-\sigma_2)\, \ldots\, (n+1-\sigma_n).\] Geometrically, graphing the points $(i,\sigma_i)$ in $\mathbb{R}^2$, the map $\Phi$ is flipping the graph of the permutation across the horizontal line given by $y=(n+1)/2$.

The next two propositions show that the map $\Phi$ sends permutations in $\mathcal{D}(S;n)$ to permutations in $\mathcal{D}(\bar{S};n)$ in a bijective manner.
\begin{proposition}
    The map $\Phi:S_n\to S_n$ is a bijection. Furthermore, it is its own inverse, that is, $\Phi\circ\Phi$ is the identity map on $S_n$.
\end{proposition}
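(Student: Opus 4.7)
The plan is a short, direct computation: show that $\Phi\circ\Phi$ equals the identity on $S_n$, which automatically forces $\Phi$ to be a bijection (with itself as two-sided inverse). So the proof splits naturally into two small steps.

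First I would verify that $\Phi$ actually lands in $S_n$, i.e., that $\Phi(\sigma)$ is indeed a permutation of $[n]$ whenever $\sigma$ is. This is because the map $x \mapsto n+1-x$ is a bijection from $[n]$ to $[n]$; composing it with the bijection $i \mapsto \sigma_i$ still produces a bijection on $[n]$, so $\Phi(\sigma) \in S_n$. This legitimizes the statement that $\Phi$ has codomain $S_n$.

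Next, I would directly compute the entries of $\Phi(\Phi(\sigma))$. If we write $\tau = \Phi(\sigma)$, then by definition $\tau_i = n+1-\sigma_i$ for each $i \in [n]$, and applying $\Phi$ again gives $\Phi(\tau)_i = n+1-\tau_i = n+1-(n+1-\sigma_i) = \sigma_i$. Therefore $\Phi(\Phi(\sigma)) = \sigma$ for every $\sigma \in S_n$, which is exactly the statement $\Phi\circ\Phi = \mathrm{id}_{S_n}$.

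Finally, having a self-inverse map immediately implies bijectivity: from $\Phi\circ\Phi = \mathrm{id}$ one gets that $\Phi$ is both injective (if $\Phi(\sigma)=\Phi(\rho)$, apply $\Phi$ again to get $\sigma=\rho$) and surjective (every $\tau$ equals $\Phi(\Phi(\tau))$, so $\tau$ is in the image of $\Phi$). There is essentially no obstacle here; the only thing worth being careful about is making the two-line verification $n+1-(n+1-\sigma_i)=\sigma_i$ visible, since that identity is the entire content of the proposition.
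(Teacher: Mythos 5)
Your proposal is correct and follows essentially the same route as the paper: compute $\Phi(\Phi(\sigma))_i = n+1-(n+1-\sigma_i)=\sigma_i$ and deduce bijectivity from the self-inverse property. The only addition is your (welcome but routine) check that $\Phi(\sigma)$ really lies in $S_n$, which the paper leaves implicit.
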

\begin{proof}
The first statement follows from the second. To prove the second, note that for $\sigma=\sigma_1\sigma_2\cdots \sigma_n \in S_n$ we have that for each $i \in [n]$,
\[(\Phi(\Phi(\sigma)))_i= n+1-(n+1-\sigma_i)=\sigma_i,\]
hence $\Phi(\Phi(\sigma))=\sigma$.
\end{proof}

\begin{proposition}\label{prop:sent}
If $\sigma\in \mathcal{D}(S;n)$, then $\Phi(\sigma)\in \mathcal{D}(\bar{S};n)$.
\end{proposition}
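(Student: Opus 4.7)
The plan is to unpack the definition of $\Phi$ and the definition of a descent, and show that $\Phi$ simply flips every strict inequality between consecutive entries. Concretely, I would fix an arbitrary index $i \in [n-1]$ and compare the pair $(\Phi(\sigma)_i, \Phi(\sigma)_{i+1}) = (n+1-\sigma_i, n+1-\sigma_{i+1})$ to the original pair $(\sigma_i, \sigma_{i+1})$, using the order-reversing nature of the map $x \mapsto n+1-x$.

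The single key step is the equivalence
\[ \sigma_i > \sigma_{i+1} \iff n+1-\sigma_i < n+1-\sigma_{i+1} \iff \Phi(\sigma)_i < \Phi(\sigma)_{i+1}, \]
which shows that $i$ is a descent of $\sigma$ if and only if $i$ is \emph{not} a descent of $\Phi(\sigma)$. (Since $\sigma$ is a permutation, $\sigma_i \neq \sigma_{i+1}$, so $\Phi(\sigma)_i \neq \Phi(\sigma)_{i+1}$, ruling out the equality case.) Equivalently, $i \in \mathcal{D}(\sigma)$ if and only if $i \notin \mathcal{D}(\Phi(\sigma))$.

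From this equivalence I would conclude $\mathcal{D}(\Phi(\sigma)) = [n-1] \setminus \mathcal{D}(\sigma) = \bar{S}$, so $\Phi(\sigma) \in \mathcal{D}(\bar{S};n)$ as desired. There is no genuine obstacle here; the only thing to be slightly careful about is remembering that $\Phi(\sigma)$ is indeed a permutation (which is immediate because $x \mapsto n+1-x$ is a bijection on $[n]$) so that the comparison is always between two distinct values and the descent/ascent dichotomy is clean.
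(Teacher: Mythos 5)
Your proposal is correct and follows essentially the same route as the paper: both arguments observe that the order-reversing map $x\mapsto n+1-x$ turns each strict inequality $\sigma_i>\sigma_{i+1}$ into $\Phi(\sigma)_i<\Phi(\sigma)_{i+1}$ and vice versa, so descents and ascents are exactly exchanged. Your version merely packages the two directions of the paper's argument into a single biconditional and explicitly notes that ties cannot occur.
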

\begin{proof}
    If $\sigma$ has a descent at index $i$, then $\sigma_i > \sigma_{i+1} $, which implies $-\sigma_i < -\sigma_{i+1}$. Adding $n+1$ to both sides, we get $ n + 1 - \sigma_i < n + 1 - \sigma_{i+1}$. Therefore, $\Phi(\sigma_i) < \Phi(\sigma_{i+1})$, so every index that is a descent in $\sigma$ will no longer be one in $\Phi(\sigma)$.

    Now, let $i\not\in S$, so $\sigma$ does not have a descent at index $i$. Then $\sigma_i < \sigma_{i+1}$. This implies that $-\sigma_i > -\sigma_{i+1}$. Adding $n+1$ to both sides, we get $n +1 - \sigma_i > n + 1 - \sigma_{i+1}$. Therefore,  $\Phi(\sigma_i) > \Phi(\sigma_{i+1})$. Every index that is not a descent in $\sigma$ is now a descent in $\Phi(\sigma)$. Therefore, if $\sigma\in \mathcal{D}(S;n)$, then $\Phi(\sigma)\in \mathcal{D}(\bar{S};n)$. 
\end{proof}

 The next result is needed to prove Theorem \ref{thm:complement}. It states that the $\ell_\infty$-metric is invariant under the map $\Phi$.
\begin{lemma}\label{lem:preserve}
For all $\sigma, \rho\in S_n$,
    $d_{\ell}(\sigma, \rho)=d_{\ell}(\Phi(\sigma), \Phi(\rho))$.
\end{lemma}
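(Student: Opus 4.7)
The plan is to prove this by a direct pointwise computation, since the map $\Phi$ only changes each coordinate by sending $x$ to $n+1-x$, which is an isometry of $\mathbb{R}$. Concretely, I would fix an arbitrary index $i \in [n]$ and compute
\[ |(\Phi(\sigma))_i - (\Phi(\rho))_i| = |(n+1-\sigma_i) - (n+1-\rho_i)| = |\rho_i - \sigma_i| = |\sigma_i - \rho_i|. \]
The additive constants $n+1$ cancel, so the component-wise absolute differences of $\Phi(\sigma)$ and $\Phi(\rho)$ are identical (as a multiset indexed by $i$) to those of $\sigma$ and $\rho$.

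From there the conclusion follows at once by taking the maximum over $i \in \{1, 2, \ldots, n\}$ on both sides of the pointwise identity, which gives
\[ d_\ell(\Phi(\sigma), \Phi(\rho)) = \max_{1 \leq i \leq n} |(\Phi(\sigma))_i - (\Phi(\rho))_i| = \max_{1 \leq i \leq n} |\sigma_i - \rho_i| = d_\ell(\sigma, \rho). \]

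There is no real obstacle here; the statement is essentially the observation that the affine reflection $x \mapsto n+1-x$ preserves absolute differences, and the $\ell_\infty$-metric depends only on those differences. No induction, case analysis, or appeal to the descent-set structure is needed, so the proof should fit comfortably in a few lines.
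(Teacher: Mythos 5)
Your proof is correct and follows the same route as the paper: a pointwise computation showing $|(\Phi(\sigma))_i - (\Phi(\rho))_i| = |\sigma_i - \rho_i|$ for each $i$, followed by taking the maximum over all indices. Nothing further is needed.
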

\begin{proof}
Given $\sigma,\rho \in S_n$ and $i\in[n]$, note that
\[|\sigma_i-\rho_i|=|(n+1-\sigma_i)-(n+1-\rho_i)|=|\Phi(\sigma)_i-\Phi(\rho)_i|.\]
Since when calculating $d_\ell$, we are only concerned with the absolute value of the difference between indices we get that $d_{\ell}(\sigma, \rho)=d_{\ell}(\Phi(\sigma), \Phi(\rho))$.
\end{proof}

\begin{theorem}\label{thm:complement}
    Let $S$ be any proper, nonempty subset of $[n]$. Then \[ \max (d_\ell(\mathcal{D}(S;n)))=\max(d_\ell (\mathcal{D}(\bar{S};n)).\]% \text { and } \min (d_\ell(S))=\min(d_\ell (\bar{S})).\]
\end{theorem}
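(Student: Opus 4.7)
The plan is to combine the three previously established facts about $\Phi$ to produce a distance-preserving bijection between $\mathcal{D}(S;n)$ and $\mathcal{D}(\bar{S};n)$. Once such a bijection exists, the two sets of pairwise $\ell_\infty$-distances coincide as multisets, and in particular their maxima agree.

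First, I would note that Proposition \ref{prop:sent} gives a well-defined map $\Phi|_{\mathcal{D}(S;n)}: \mathcal{D}(S;n) \to \mathcal{D}(\bar{S};n)$. Applying Proposition \ref{prop:sent} with $S$ replaced by $\bar{S}$ yields a map in the opposite direction $\mathcal{D}(\bar{S};n) \to \mathcal{D}(\overline{\bar{S}};n) = \mathcal{D}(S;n)$, which is again just the restriction of $\Phi$. Since $\Phi \circ \Phi$ is the identity on all of $S_n$ by the earlier proposition, these two restrictions are mutual inverses, so $\Phi|_{\mathcal{D}(S;n)}$ is a bijection between $\mathcal{D}(S;n)$ and $\mathcal{D}(\bar{S};n)$.

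Next, I would invoke Lemma \ref{lem:preserve}: for any $\sigma, \rho \in \mathcal{D}(S;n)$ with $\sigma \neq \rho$, the images $\Phi(\sigma), \Phi(\rho)$ lie in $\mathcal{D}(\bar{S};n)$, are still distinct (because $\Phi$ is a bijection), and satisfy $d_\ell(\sigma,\rho) = d_\ell(\Phi(\sigma), \Phi(\rho))$. This shows the containment
\[ d_\ell(\mathcal{D}(S;n)) \subseteq d_\ell(\mathcal{D}(\bar{S};n)). \]
Running the symmetric argument with the roles of $S$ and $\bar{S}$ reversed (using that $\bar{\bar S}=S$) gives the reverse containment, so the two sets of distances are equal. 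Taking maxima finishes the proof.

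I do not anticipate a genuine obstacle here; the only point needing slight care is confirming that $\Phi$ restricted to $\mathcal{D}(S;n)$ really is a bijection onto $\mathcal{D}(\bar S;n)$ (rather than just a map into it), which is why I explicitly use Proposition \ref{prop:sent} in both directions together with involutivity. One minor remark to include: the hypothesis that $S$ is a proper nonempty subset ensures both $\mathcal{D}(S;n)$ and $\mathcal{D}(\bar{S};n)$ contain at least two elements whenever one does, so that the maxima on both sides are taken over nonempty sets and the equality is not vacuous in a misleading way.
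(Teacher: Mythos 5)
Your proposal is correct and follows essentially the same route as the paper: both use Proposition \ref{prop:sent} to map extremal pairs into the complementary descent class and Lemma \ref{lem:preserve} to preserve distances, then run the argument symmetrically in $S$ and $\bar{S}$ to get both inequalities. Your extra observation that $\Phi$ restricts to a bijection (via involutivity) is a slight strengthening but not needed for the equality of maxima.
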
 

\begin{proof}
Let $m=\max (d_\ell(\mathcal{D}(S;n)))$ and $\sigma,\rho \in \mathcal{D}(S;n)$ such that $d_\ell(\sigma,\rho)=m$. By Proposition \ref{prop:sent}, $\Phi(\sigma),\Phi(\rho)\in\mathcal{D}(\bar{S};n)$. By Lemma \ref{lem:preserve}, $d_\ell(\Phi(\sigma),\Phi(\rho))=m$, hence $\max(d_\ell (\mathcal{D}(\bar{S};n))\leq \max(d_\ell (\mathcal{D}(S;n))$. Applying the same argument to $\mathcal{D}(\bar{S};n)$, we get $\max(d_\ell (\mathcal{D}(S;n))\leq \max(d_\ell (\mathcal{D}(\bar{S};n))$.
\end{proof}

We can apply Theorem \ref{thm:complement} to the results in Theorems \ref{thm:n-idescents} and \ref{thm:singleton} to get the following two corollaries.

\begin{corollary} Let $n\geq 3$. 
    For any $i \in \{2, \ldots, n-1]$, let $\bar{S_i}=\{n-i+1, n-i+2,\ldots, n-1\}$ and let 
%\[\bar{D_i}:=
\[ \mathcal{D}(\bar{S_i};n)=\{\sigma \in S_n\, | \, \mathcal{D}(\sigma)=\bar{S_i}\},\]
then  
\[\max(d_\ell(\mathcal{D}(\bar{S_i};n)))=\max\{i-1,n-i\}=
%\[\max(d_\ell(\bar{D_i}))=\max\{i-1,n-i\}=
\begin{cases}
    i-1& \text{ if } i\geq \lfloor \frac{n+1}{2}\rfloor\\
    n-i&\text{ if } i\leq \lfloor \frac{n+1}{2}\rfloor.
\end{cases}\]
\end{corollary}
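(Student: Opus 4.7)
The plan is to deduce this result as an immediate consequence of Theorem~\ref{thm:complement} applied to the case handled in Theorem~\ref{thm:n-idescents}. The first step is a purely set-theoretic verification: with $S_i = [n-i] = \{1, 2, \ldots, n-i\}$ from Theorem~\ref{thm:n-idescents}, its complement in $[n-1]$ is
\[
[n-1] \setminus S_i = \{n-i+1,\, n-i+2,\, \ldots,\, n-1\},
\]
which is exactly the set $\bar{S_i}$ defined in the corollary. Thus the $\bar{S_i}$ notation is consistent with the complement operation introduced at the start of Section~5.

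With this identification in hand, I would then invoke Theorem~\ref{thm:complement} to conclude that
\[
\max(d_\ell(\mathcal{D}(S_i;n))) = \max(d_\ell(\mathcal{D}(\bar{S_i};n))),
\]
and apply Theorem~\ref{thm:n-idescents} to evaluate the left-hand side as $\max\{i-1,\, n-i\}$. The same piecewise description in terms of the cutoff $\lfloor (n+1)/2 \rfloor$ then transfers verbatim to $\bar{S_i}$.

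There is essentially no obstacle here; the corollary is a one-line consequence of the two named theorems. The only point of care is to confirm that the range $i \in \{2, \ldots, n-1\}$ in the corollary matches the range in Theorem~\ref{thm:n-idescents}, which guarantees that $S_i$ (and hence $\bar{S_i}$) is a proper nonempty subset of $[n-1]$ so that Theorem~\ref{thm:complement} is applicable.
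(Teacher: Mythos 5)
Your proposal is correct and matches the paper exactly: the paper derives this corollary by applying Theorem~\ref{thm:complement} to Theorem~\ref{thm:n-idescents}, noting that $\bar{S_i}$ is the complement of $S_i=[n-i]$ in $[n-1]$. Your added checks (that the complement is the stated set and that $S_i$ is proper and nonempty on the given range of $i$) are the right details to verify.
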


\begin{corollary}
 Let $n\geq 6$ and consider $\mathcal{D}(\bar{\{i\}};n)$, %\bar{D_i}=
 then
    \[  \max(d_\ell(\mathcal{D}(\bar{\{i\}};n)))= 
     \begin{cases} n-2 & \text{ for }  i=1, n-1  \\ 
    n-i & \text{ for } i= 2,3, \ldots, \left\lfloor\frac{n}{2}\right\rfloor \\
    i & \text{for } i =\lceil\frac{n}{2}\rceil,\lceil\frac{n}{2}\rceil+1,\ldots, n-2.  
    \end{cases}\]
\end{corollary}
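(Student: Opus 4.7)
The plan is to observe that this corollary is a direct consequence of Theorem \ref{thm:complement} combined with Theorem \ref{thm:singleton}. The subset $\{i\} \subset [n-1]$ is proper and nonempty (since $i \in \{1,2,\ldots,n-1\}$ and $n \geq 6$), so its complement $\overline{\{i\}} = [n-1] \setminus \{i\}$ is also a proper, nonempty subset of $[n-1]$, meaning Theorem \ref{thm:complement} applies.

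First, I would invoke Theorem \ref{thm:complement} with $S = \{i\}$ to conclude
\[\max(d_\ell(\mathcal{D}(\overline{\{i\}};n))) = \max(d_\ell(\mathcal{D}(\{i\};n))).\]
Then I would apply Theorem \ref{thm:singleton} to the right-hand side, which (using the hypothesis $n \geq 6$) gives exactly the piecewise formula in the statement of the corollary. Substitution completes the argument.

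There is no real obstacle here; the content of this result lies entirely in the theorems that have already been proved. The only thing one should briefly check is that the ranges of $i$ in the statement ($i = 1, n-1$ and $i = 2, \ldots, \lfloor n/2 \rfloor$ and $i = \lceil n/2 \rceil, \ldots, n-2$) match exactly the ranges in Theorem \ref{thm:singleton}, which they do by construction of the corollary. Thus the proof is a two-line citation of the preceding results.
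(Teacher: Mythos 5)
Your proposal is correct and matches the paper exactly: the paper derives this corollary precisely by applying Theorem \ref{thm:complement} to the formula in Theorem \ref{thm:singleton}, with no additional argument needed. The brief check that the ranges of $i$ carry over unchanged is the only content, and you have it.
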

\section{Acknowledgments}
The authors thank Villanova University’s Co-MaStER program.  
A. Diaz-Lopez’s research is supported in part by the National Science Foundation grant DMS-2211379.

\bibliographystyle{amsplain}
\bibliography{MetricsDescentSet}
\end{document}